\documentclass[twoside,9pt]{article}
\usepackage{graphicx,amscd,amsfonts,amsmath,amsthm,amssymb,latexsym,amsfonts,color}
\usepackage{epsfig,float,epstopdf,array,bm,cite}
\usepackage[bookmarksnumbered, colorlinks,plainpages]{hyperref}
\footskip=27pt

\setlength{\textwidth}{15cm}
\setlength{\textheight}{21cm}
\setlength{\topmargin}{-0.5cm}
\setlength\oddsidemargin{0.0cm}
\setlength\evensidemargin{0.0cm}

\setcounter{page}{1}

\newtheorem{theorem}{\bf Theorem}
\newtheorem{lemma}{\bf Lemma}

\newcommand{\norm}[1]{{ \left\| { #1 } \right\| }}
\def\h{\hspace{-0.2cm}}
\begin{document}
\title{ A preconditioner based on the shift-splitting method for generalized saddle point problems}
\author{{ Davod Khojasteh Salkuyeh\footnote{Corresponding author}, Mohsen Masoudi and Davod Hezari}\\[2mm]
\textit{{\small Faculty of Mathematical Sciences, University of Guilan, Rasht, Iran}} \\
\textit{{\small E-mails: khojasteh@guilan.ac.ir, masoodi\_mohsen@ymail.com, d.hezari@gmail.com}\textit{}}}
\date{}
\maketitle
{\bf Abstract.} In this paper, we propose a preconditioner based on the shift-splitting method for generalized saddle point problems with nonsymmetric positive definite (1,1)-block and symmetric positive semidefinite $(2,2)$-block. The proposed preconditioner is obtained from an basic iterative method which is unconditionally convergent.  We also present a relaxed version of the proposed method. Some numerical experiments are presented to show the effectiveness of the method.\\[-3mm]

\noindent{\it  Keywords}: {Generalized saddle point, preconditioner, shift-splitting, Navier-Stokes.}\\
\noindent{\it  AMS Subject Classification}: 65F10, 65F50, 65N22. \\

\pagestyle{myheadings}\markboth{D.K. Salkuyeh, M. Masoudi and D. Hezari}{A preconditioner based on the shift-splitting method for generalized saddle point problems}

\thispagestyle{empty}

\section{Introduction}

We consider the solution of the following large and sparse generalized saddle point problem
\begin{equation}\label{sdpr}
\mathcal{A}u=\left(
  \begin{array}{cc}
    A & B^T \\
   -B & C
  \end{array}
\right)
\left(
  \begin{array}{c}
    x \\
    y
  \end{array}
\right)=
\left(
  \begin{array}{c}
    f \\
    -g
  \end{array}
\right)=b,
\end{equation}
where $A\in \mathbb{R}^{n \times n}$ is nonsymmetric positive definite ($x^TAx>0$ for all $0\neq x \in \Bbb{R}^n$), $C\in \mathbb{R}^{m \times m}$ is symmetric positive semidefinite, the matrix $B\in \mathbb{R}^{m\times n}$ is of  full row rank, $x,f\in \mathbb{R}^{n}$, $y,g\in \mathbb{R}^{m}$ and $m\leq n$. It can be verified that the system (\ref{sdpr}) has a unique solution \cite[Lemma 1.1]{Benzi-a1}. Saddle point problems of the form (\ref{sdpr}) arise from finite difference or finite element discretization of the Navier-Stokes problem (see \cite{Benzi-sur} and references therein). Several iterative method have been presented to solve system (\ref{sdpr}) or some special cases of it in the literature. The main methods have been reviewed in \cite{Benzi-sur}. In \cite{Benzi-a1}, Benzi and Golub presented the Hermitian and skew-Hermitian splitting  (HSS) method to solve (\ref{sdpr}). Since, in general, the HSS method is too slow to be used to solve \cite{Benzi-a1}, they used the GMRES method in conjunction with the  preconditioner extracted from the HSS method to solve (\ref{sdpr}). Recently, when the matrix $A$ is symmetric positive definite,  Salkuyeh et al. in \cite{Salkuyeh2015} have presented a stationary iterative method based on the shift-splitting method to solve (\ref{sdpr}). The proposed method naturally serves a preconditioner for the problem (\ref{sdpr}). More recently, Cao et al. in \cite{Cao2015} have considered the same iterative method to solve the system (\ref{sdpr}) when $C=0$. In this paper, we consider the problem (\ref{sdpr}) in its general form and investigate the convergence properties of the proposed iterative method and the corresponding preconditioner.

\section{Main results}

For the sake of the simplicity we use the nations used in \cite{Salkuyeh2015}.
Assuming  $\alpha , \beta >0$, Salkuyeh et al. in \cite{Salkuyeh2015} considered the splitting  $\mathcal{A}=\mathcal{M}_{\alpha,\beta}-\mathcal{N}_{\alpha,\beta}$ for the saddle point problem (\ref{sdpr}) with $A$ being symmetric positive definite, where
\begin{equation}\label{MGHSSsplit}
\mathcal{M}_{\alpha,\beta}=
\frac{1}{2}
\left( {\begin{array}{*{20}{c}}
\alpha I +A & B^T\\
-B & \beta I+ C
\end{array}} \right) \quad \textrm{and} \quad
\mathcal{N}_{\alpha,\beta}=\frac{1}{2}
\left( {\begin{array}{*{20}{c}}
\alpha I -A & -B^T\\
B & \beta I- C
\end{array}} \right).
\end{equation}
This splitting gives the following basic iterative method (hereafter is denoted by the MGSS iteration scheme)
\begin{equation}\label{MGHSSit}
\mathcal{M}_{\alpha,\beta} u^{(k+1)}=\mathcal{N}_{\alpha,\beta} u^{(k)}+b
\end{equation}
for solving the linear system \eqref{sdpr}, where $u^{(0)}$ is an initial guess. In continuation, we show that the symmetry of the matrix $A$ can be omitted. From Eq. (\ref{MGHSSit}), we see that the iteration matrix of the proposed method is $\Gamma_{\alpha,\beta} =\mathcal{M}_{\alpha,\beta}^{-1}\mathcal{N}_{\alpha,\beta}$. Hence, the method is convergent if and only if the spectral radius of $\Gamma_{\alpha,\beta}$ is less than 1, i.e., $\rho(\Gamma_{\alpha,\beta})<1$.


\begin{lemma}\label{Lem1}(\cite[Lemma 1]{Salkuyeh2015})
Assume that $\alpha$ and $\beta$ are two positive numbers. If $\lambda$ is an eigenvalue of the matrix  $\Gamma_{\alpha,\beta}$, then $\lambda \neq \pm1$.
\end{lemma}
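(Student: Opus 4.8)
The plan is to avoid inverting $\mathcal{M}_{\alpha,\beta}$ and instead argue from the generalized eigenvalue form of the problem. Writing $\Gamma_{\alpha,\beta}=\mathcal{M}_{\alpha,\beta}^{-1}\mathcal{N}_{\alpha,\beta}$, an eigenpair $(\lambda,z)$ with $z\neq 0$ satisfies $\mathcal{N}_{\alpha,\beta}z=\lambda\mathcal{M}_{\alpha,\beta}z$. The structural fact I would exploit is the splitting identity $\mathcal{A}=\mathcal{M}_{\alpha,\beta}-\mathcal{N}_{\alpha,\beta}$ from (\ref{MGHSSsplit}): substituting $\mathcal{N}_{\alpha,\beta}=\mathcal{M}_{\alpha,\beta}-\mathcal{A}$ turns the eigenvalue equation into
\[
(1-\lambda)\,\mathcal{M}_{\alpha,\beta}\,z=\mathcal{A}z.
\]

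First I would rule out $\lambda=1$. Putting $\lambda=1$ kills the left-hand side and leaves $\mathcal{A}z=0$; since $\mathcal{A}$ is nonsingular (the system (\ref{sdpr}) has a unique solution by \cite[Lemma 1.1]{Benzi-a1}), this forces $z=0$, contradicting $z\neq 0$. Next I would rule out $\lambda=-1$. Putting $\lambda=-1$ gives $(2\mathcal{M}_{\alpha,\beta}-\mathcal{A})z=0$, and a short block computation from (\ref{MGHSSsplit}) shows that the off-diagonal blocks and the $A,C$ contributions cancel, leaving
\[
2\mathcal{M}_{\alpha,\beta}-\mathcal{A}=\begin{pmatrix}\alpha I & 0\\ 0 & \beta I\end{pmatrix},
\]
which is nonsingular because $\alpha,\beta>0$. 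Hence $z=0$ again, a contradiction, and we conclude $\lambda\neq\pm 1$.

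I do not expect a genuine obstacle here: the argument is entirely algebraic and rests only on the nonsingularity of $\mathcal{A}$ and the positivity of $\alpha$ and $\beta$ (indeed, these two facts are precisely what make $\lambda=1$ and $\lambda=-1$ impossible, respectively), and it goes through verbatim for complex $\lambda$ since the relevant matrices are real and nonsingular over $\mathbb{C}$ as well. The only step requiring a little care is the block cancellation producing the diagonal matrix above, since the sign convention in the $(2,1)$ block of (\ref{MGHSSsplit}) must be tracked correctly; everything else is immediate.
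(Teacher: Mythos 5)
Your proof is correct and is essentially the same argument the lemma rests on: the paper itself offers no proof (it simply cites \cite[Lemma 1]{Salkuyeh2015}), and the cited proof is exactly your dichotomy---$\lambda=1$ would force $\mathcal{A}z=0$, contradicting the nonsingularity of $\mathcal{A}$, while $\lambda=-1$ gives $(\mathcal{M}_{\alpha,\beta}+\mathcal{N}_{\alpha,\beta})z=0$ with $\mathcal{M}_{\alpha,\beta}+\mathcal{N}_{\alpha,\beta}$ equal to the nonsingular block-diagonal matrix with blocks $\alpha I$ and $\beta I$. Your computation of $2\mathcal{M}_{\alpha,\beta}-\mathcal{A}$ is just a repackaging of $\mathcal{M}_{\alpha,\beta}+\mathcal{N}_{\alpha,\beta}$, so the two routes coincide.
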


\begin{lemma}\label{Lem2}
Let $A\in \Bbb{R}^n$ be a nonsymmetric positive definite matrix. Then, $\Re(x^*Ax)>0$,  for any $0\neq x \in \Bbb{C}^n$.
\begin{proof}
Let $x=r+is$, where $r,s\in \Bbb{R}^n$. Obviously, both of the vectors $r$ and $s$ can not be zero simultaneously. On the other hand,
\[
x^*Ax=(r^T-is^T)A(r+is)=r^TAr+s^TAs+ir^T (A-A^T)s.
\]
Hence, $\Re(x^*Ax)=r^TAr+s^TAs>0$.
\end{proof}
\end{lemma}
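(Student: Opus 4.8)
The plan is to reduce the complex quadratic form $\Re(x^*Ax)$ to a sum of real quadratic forms of exactly the type that appears in the definition of positive definiteness. First I would split $x$ into its real and imaginary parts, writing $x = r + is$ with $r,s \in \mathbb{R}^n$. Since $x \neq 0$, the real vectors $r$ and $s$ cannot both be zero; I will record this observation now because it is precisely what I will need at the very last step.

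Next I would expand $x^*Ax = (r^T - is^T)A(r+is)$ and group the resulting four terms. Because $A$ is a real matrix, the two ``diagonal'' terms $r^TAr$ and $s^TAs$ are real scalars, whereas the two cross terms $i\,r^TAs - i\,s^TAr$ are purely imaginary. The one algebraic point worth isolating is that these cross terms really do contribute only to the imaginary part: each is a scalar, so $s^TAr = r^TA^Ts$, and hence the cross terms combine into $i\,r^T(A-A^T)s$, which is $i$ times a real number. Taking real parts then yields $\Re(x^*Ax) = r^TAr + s^TAs$.

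Finally I would invoke the hypothesis in the form the paper uses, namely that $A$ being positive definite means $v^TAv > 0$ for every nonzero real vector $v$. Applying this to $r$ and to $s$ gives $r^TAr \geq 0$ and $s^TAs \geq 0$, each inequality being strict exactly when the corresponding vector is nonzero. Since $r$ and $s$ are not both zero, at least one of the two terms is strictly positive, and therefore $\Re(x^*Ax) = r^TAr + s^TAs > 0$, as claimed.

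I do not anticipate a real obstacle in this argument; it is essentially bookkeeping. The only step demanding any care is isolating the real part, i.e.\ checking that the symmetric part of $A$ governs $\Re(x^*Ax)$ while the skew-symmetric part governs the imaginary part. Once that identity is established, the conclusion follows immediately by applying the definition of positive definiteness separately to the real and imaginary components of $x$.
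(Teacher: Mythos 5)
Your proposal is correct and follows essentially the same route as the paper: decompose $x = r + is$, expand $(r^T - is^T)A(r+is)$, use the scalar identity $s^TAr = r^TA^Ts$ to see the cross terms are purely imaginary, and conclude $\Re(x^*Ax) = r^TAr + s^TAs > 0$ since $r$ and $s$ are not both zero. Your write-up is in fact slightly more careful than the paper's at the final step, where you explicitly note that each term is nonnegative and at least one is strictly positive.
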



\begin{theorem}
\label{th1}
Let $\lambda$ be an eigenvalue of the matrix $\Gamma$ and $\alpha , \beta >0$. Then $| {\lambda}|<1$.
\end{theorem}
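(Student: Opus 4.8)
The plan is to work directly with the eigen-equation $\mathcal{N}_{\alpha,\beta}u=\lambda\mathcal{M}_{\alpha,\beta}u$ for a nonzero eigenvector $u=(x^T,y^T)^T$, and to convert the question ``$|\lambda|<1$'' into a half-plane condition through the Cayley-type substitution $\mu=\frac{1+\lambda}{1-\lambda}$. Since $\lambda\neq\pm1$ by Lemma~\ref{Lem1}, this $\mu$ is well defined, finite and nonzero; moreover $\mu\neq-1$ (otherwise $1=-1$), so $\lambda=\frac{\mu-1}{\mu+1}$, and $|\lambda|<1$ is equivalent to $|\mu-1|<|\mu+1|$, i.e. to $\Re(\mu)>0$. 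Thus the whole theorem reduces to showing $\Re(\mu)>0$.

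First I would rewrite the two block rows of $\mathcal{N}_{\alpha,\beta}u=\lambda\mathcal{M}_{\alpha,\beta}u$. A short computation turns them into
\[
\alpha x=\mu\,(Ax+B^Ty),\qquad -\beta y=\mu\,(Bx-Cy).
\]
Before using these I must rule out $x=0$: if $x=0$ the first relation gives $\mu B^Ty=0$, and since $\mu\neq0$ and $B$ has full row rank (so $B^T$ is injective), this forces $y=0$, contradicting $u\neq0$. Hence $x\neq0$.

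The key step is to test these two relations against $x$ and $y$. Multiplying the first by $x^*$ and the second by $y^*$ gives $\alpha\,x^*x=\mu(x^*Ax+x^*B^Ty)$ and $-\beta\,y^*y=\mu(y^*Bx-y^*Cy)$. Writing $w=x^*B^Ty$ and noting $y^*Bx=\bar w$ (as $B$ is real), I would then subtract the second identity from the first. The coupling terms collapse because $w-\bar w=2i\,\Im(w)$, leaving
\[
\alpha\,x^*x+\beta\,y^*y=\mu\big(x^*Ax+y^*Cy+2i\,\Im(w)\big).
\]
The real part of the bracket is $\Re(x^*Ax)+y^*Cy$, since $y^*Cy$ is real ($C$ symmetric) and the remaining term is purely imaginary.

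Finally I would solve for $\mu$ and read off its real part. The left-hand side $\alpha\,x^*x+\beta\,y^*y$ is real and strictly positive (here $x\neq0$ is essential, since $y$ may vanish), while $\Re(x^*Ax)+y^*Cy>0$ by Lemma~\ref{Lem2} together with the semidefiniteness of $C$; in particular the bracket is nonzero and division is legitimate. Consequently $\Re(\mu)$ equals $(\alpha\,x^*x+\beta\,y^*y)$ times $\big(\Re(x^*Ax)+y^*Cy\big)$ divided by $\big|x^*Ax+y^*Cy+2i\,\Im(w)\big|^2$, which is strictly positive, and the Cayley equivalence yields $|\lambda|<1$. I expect the only delicate points to be the bookkeeping that makes the off-diagonal coupling cancel into a purely imaginary contribution (the subtraction of the two scalar identities), and the edge case $x=0$, which must be excluded first so that the numerator of $\Re(\mu)$ is genuinely positive rather than merely nonnegative.
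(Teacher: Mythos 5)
Your proof is correct, and while it rests on the same core idea as the paper's --- a M\"obius (Cayley) substitution turning $|\lambda|<1$ into a right-half-plane condition, settled by $\Re(x^*Ax)>0$ (Lemma~\ref{Lem2}), $y^*Cy\ge 0$ and $\alpha,\beta>0$ --- the mechanics are genuinely different, and in two respects cleaner. The paper multiplies only the first block equation by $x^*$, then uses the second block equation to solve for $Bx$ and substitute, which forces a case split ($Bx=0$ versus $Bx\neq 0$) and yields the scalar relation $\alpha\omega+\beta q\overline{\omega}=p+r$ with $\omega=\frac{1-\lambda}{1+\lambda}$ (your $\mu$ is just $1/\omega$, and $\Re(\omega)>0\Leftrightarrow\Re(\mu)>0$). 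You instead test \emph{both} block equations (against $x^*$ and $y^*$) and subtract, so the coupling term $w=x^*B^Ty$ cancels into the purely imaginary quantity $2i\,\Im(w)$; this gives one unified identity
\[
\alpha\,x^*x+\beta\,y^*y=\mu\bigl(x^*Ax+y^*Cy+2i\,\Im(w)\bigr)
\]
covering all cases at once, with no case analysis. Moreover, the paper disposes of the possibility $x=0$ by citing Theorem 1 of \cite{Salkuyeh2015}, whereas your argument ($x=0$ and $\mu\neq 0$ force $B^Ty=0$, hence $y=0$ by full row rank of $B$) is self-contained; this is a real gain, since it makes the whole theorem independent of the external reference. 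One small imprecision: you attach the need for $x\neq 0$ to the strict positivity of the left-hand side $\alpha\,x^*x+\beta\,y^*y$, but that side is positive whenever $u\neq 0$; where $x\neq 0$ is truly indispensable is in the bracket, since if $x=0$ its real part degenerates to $y^*Cy$, which may vanish because $C$ is only semidefinite --- your closing sentence does identify this correctly.
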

\begin{proof}

Let $u=(x ; y)$ be an eigenvector corresponding to the  eigenvalue $\lambda $ of $\Gamma_{\alpha,\beta}$.
Then, we have $\mathcal{N}_{\alpha,\beta}u=\lambda \mathcal{M}_{\alpha,\beta}u$ which is equivalent to
\begin{eqnarray}
  (\alpha I-A)x-B^Ty &\h=\h& \lambda (\alpha I+A)x+\lambda B^Ty, \label{EigEq1} \\
  Bx+(\beta I-C)y &\h=\h& -\lambda Bx+ \lambda (\beta I+C) y.  \label{EigEq2}
\end{eqnarray}
According to Theorem 1 in \cite{Salkuyeh2015} we have $x\neq 0$.

Without loss of generality it is assumed that $\norm{x}_2=1$. Premultiplying both sides of \eqref{EigEq1} by $x^*$ yields
\begin{equation}\label{EigEq3}
\alpha-x^*Ax -(Bx)^*y=\lambda (\alpha \norm{x}_2^2+x^*Ax)+\lambda (Bx)^*y.
\end{equation}
Since $A$ is positive definite, according to Lemma \ref{Lem2} we have $\Re(x^*Ax)>0$.
If $Bx=0$, then Eq. \eqref{EigEq3} implies
\begin{eqnarray*}
|\lambda| =\frac{|\alpha  -x^*Ax|}{|\alpha +x^*Ax|}=\frac{\sqrt{ (\alpha  -\Re(x^*Ax))^2+(\Im(x^*Ax))^2}}{\sqrt{ (\alpha  +\Re(x^*Ax))^2+(\Im(x^*Ax))^2}}<1.
 \end{eqnarray*}
We now assume that $Bx \neq 0$. In this case, from Eq. \eqref{EigEq2} we obtain
\begin{equation}\label{EigEq4}
 Bx=\frac{\beta (\lambda-1)}{\lambda+1}y+Cy.
 \end{equation}
Substituting Eq. \eqref{EigEq4} in \eqref{EigEq3} yields
 \begin{equation*}
 (1-\lambda) \alpha -(1+\lambda)x^*Ax=(1+\lambda)\left( \beta \frac{ \overline{\lambda}-1}{1+\overline{\lambda}}y^*y+y^*Cy\right).
 \end{equation*}
Letting $p=x^*Ax$, $q=y^*y$, and $r=y^*Cy$, it follows from the latter equation that
\begin{equation}\label{Eqwwb}
\alpha \omega + \beta q \overline{\omega}=p+r,\quad \textrm{with}\quad \omega=\frac{1-\lambda}{1+\lambda}.
\end{equation}
Since $\alpha,\beta,\Re(p)>0$ and $ q,r\geq 0$, form \eqref{Eqwwb} we see that
\[
\Re(w)=\frac{\Re(p)+r}{\alpha+\beta q}>0.
\]
Hence, we have
\[
|\lambda|=\frac{|1-\omega|}{|1+\omega|}=\sqrt { \frac{ (1-\Re(\omega))^2+\Im(\omega)^2 }{ (1+\Re(\omega))^2+\Im(\omega)^2 } }<1,
\]
which completes the proof.
\end{proof}


Theorem \ref{th1} shows that  the  MGSS method is convergent and therefore it provides the preconditioner $\mathcal{P}_{MGSS}=\mathcal{M}_{\alpha,\beta}$ for a Krylov subspace method such as GMRES, or its restarted version GMRES($m$) for solving the saddle point problem \eqref{sdpr}. Implementation of the method is  as described in \cite{Salkuyeh2015}. We can also use a relaxed version of the  MGSS (say RMGSS)  preconditioner
\[
\mathcal{P}_{RMGSS}=
\left( {\begin{array}{*{20}{c}}
A & B^T\\
-B & \beta I+ C
\end{array}} \right).
\]
for the saddle point  problem \eqref{sdpr}.  Similar to Theorem 2 in \cite{Salkuyeh2015} one may discuss about the eigenvalues distribution of the coefficient matrix of the preconditioned system.

\section{Numerical experiments}\label{SEC3}

We consider the steady-state Navier-Stokes equation
\[
\left\{
  \begin{array}{rl}
    -\nu \triangle {\bf u}+({\bf u}.\nabla){\bf u}+\nabla p&={\bf f},\\
    \nabla.{\bf u}&=0,
  \end{array}
\right.\quad {\rm in}\quad \Omega=[0,1]\times [0,1],
\]
where $\nu>0$. By the  IFISS package \cite{IFISS}, this problem is linearized by the Picard iteration and then discretized by using the stabilized Q1-P0 finite elements (see \cite{Elmanbook}). The stabilization parameter is set to be $0.1$. This yields a generalized saddle point problem of the form (\ref{sdpr}). The right-hand side vectors $f$ and $g$ are taken such that $x$ and $y$ are two vectors of all ones. In Table \ref{Tbl}, the generic properties of the coefficient matrix have been given.

We use GMRES(30) in conjunction with the preconditioner $\mathcal{P}_{MGSS}$ to solve the saddle point problem (\ref{sdpr}). Numerical results are given in Table \ref{Tbl}. In this table ``Iters" and ``CPU" stand, respectively, for the number of iterations  and the CPU time (in seconds) for the convergence. To show the effectiveness of the methods we also give the results of  GMRES(30) without preconditioning.  We use a null vector as an initial guess and the  stopping criterion $ \|b-{\cal A}x^{(k)}\|_2< 10^{-9}\| b\|_2$. In the implementation  of the preconditioner $\mathcal{P}_{MGSS}$ (see Algorithm 1 in \cite{Salkuyeh2015}),  we use the Cholesky factorization of $\beta I+C$ and the GMRES(10) method to solve the inner systems.  It is noted that, the inner iteration is terminated   when the residual norm is reduced by a factor of $10^2$ and the maximum of the inner iterations is set to be 40.  In the MGSS method the parameters $\alpha$ and $\beta$  are set to be $0.01$ and $0.001$, respectively. As seen, the proposed preconditioner is very effective in reducing the number of iterations and CPU times.

\begin{table}
\centering
\caption{Numerical results for the test problem with $\nu=1/50$. \label{Table2}}
\vspace{-0.cm}
\begin{tabular}{ccccccccccc}\\ \hline \\   
               & && &  GMRES(30)\hspace{-1.5cm} & &  & &  MGSS \hspace{-1.5cm} &   \\
                   \cline{5-6} \cline{9-10}  \\[0mm]
Grid           & $m$ & $n$ &&  Iters   &  CPU(s)       &   &  & Iters  & CPU(s)  \\ \hline
$8 \times 8$   &  162   & 62   & &  59   & 0.08          &   &  & 6   & 0.90   \\[1mm]
$16 \times 16$ &  578   & 256  & & 115   & 0.28          &   &  & 8   & 2.02   \\[1mm]
$32 \times 32$ &  2178  & 1024 & & 608   & 4.95          &   &  & 12  & 3.58   \\[1mm]
$64 \times 64$ &  8450  & 4096 & & 3554   & 110.1        &   &  & 28  & 21.48   \\ \hline
\end{tabular}
\label{Tbl}
\end{table}

\enddocument